\renewenvironment{proof}[1][\proofname]{\par
  \pushQED{\qed}
  \normalfont \partopsep=\z@skip \topsep=\z@skip
  \trivlist
  \item[\hskip\labelsep
    \itshape
  #1\@addpunct{.}]\ignorespaces
}{
  \popQED\endtrivlist\@endpefalse\vspace{0.3em}
}
\DeclareMathOperator{\sgn}{sgn}
\DeclareMathOperator{\tr}{tr}
\newtheorem{corollary}{Corollary}
\newtheorem{lemma}{Lemma}
\newtheorem{proposition}{Proposition}
\newtheorem{question}{Question}
\newtheorem{theorem}{Theorem}
\newcommand{\M}{\mathbb{M}}
\newcommand{\N}{\mathbb{N}}
\newcommand{\R}{\mathbb{R}}
\renewcommand{\epsilon}{\varepsilon}
\renewcommand{\phi}{\varphi}
\renewcommand{\rho}{\varrho}
\begin{document}

\title[Gasper's determinant theorem, revisited]{Gasper's determinant theorem, revisited}
\author{Markus Sigg}
\address{Freiburg, Germany}
\email{mail@markussigg.de}
\date{April 9, 2018.}

\begin{abstract}
  Let $n \ge 2$ be a natural number, $M$ a real $n \times n$ matrix, $s$ the sum of the entries of $M$ and $q$ the sum of their squares. With $\alpha := s/n$ and $\beta := q/n$, Gasper's determinant bound says that $ |\det M| \le \beta^{n/2}$, and in case of $\alpha^2 \ge \beta$: $$|\det M| \le |\alpha| \left(\frac{n\beta-\alpha^2}{n-1}\right)^{\frac{n-1}2}$$ This article gives a corrected proof of Gasper's theorem and lists some more applications.
\end{abstract}

\maketitle

{
  \small Keywords: determinant bound, Hadamard's inequality, infinite determinant, von Koch matrix.\\
  AMS subjects classification 2010: 15A15, 15A45, 26D07.
}

\section{Introduction}

The present article is primarily a revised version of \cite{gasper}, ironing
out a flaw in the proof of \cite{gasper}, Theorem 1, adding statements about
complex matrices and about infinite determinants and mentioning a few more
applications. We do not repeat the numerical results concerning determinants of
matrices whose entries are a permutation of the numbers $1,\dots,n^2$. See
\cite{wilson} for these.

Throughout, let $n > 1$ be a natural number and $N := \{1,\dots,n\}$. Whenever
not stated otherwise, \emph{matrix} means a real $n \times n$ matrix, the set
of which we denote by $\M$.

For $M \in \M$ and $i,j \in N$ we denote by $M_i$ the $i$-th row of $M$, by
$M^j$ the $j$-th column of $M$, and by $M_{i,j}$ the entry of $M$ at position
$(i,j)$. If $M$ is a matrix or a row or column of a matrix, then by $s(M)$ we
denote the sum of the entries of $M$ and by $q(M)$ the sum of their squares.

The identity matrix is denoted by $I$. By $J$ we name the matrix which has $1$
as all of its entries, while $e$ is the column vector in $\R^n$ with all
entries being $1$. Matrices of the structure $xI+yJ$ will play an important
role, so we state some of their properties:

\begin{lemma}\label{ij}
  Let $x,y \in \R$ and $M := xI + yJ$. Then we have:
  \begin{enumerate}
  \item $\det M = x^{n-1}(x+ny)$
  \item $M$ is invertible if and only if $x \not\in \{0,-ny\}$.
  \item If $M$ is invertible, then $M^{-1} = \frac1x I - \frac y{x(x+ny)}J$.
  \end{enumerate}
\end{lemma}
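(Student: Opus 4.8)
The plan is to prove (1) first, deduce (2) from it, and then confirm (3) by a direct matrix multiplication.

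For (1) the key observation is that $J = e e^{\top}$ has rank one: its column space is spanned by $e$, and $Je = ne$, so as a linear map $J$ has the eigenvalue $n$ with multiplicity one and the eigenvalue $0$ with multiplicity $n-1$. Hence $M = xI + yJ$ has the eigenvalue $x+ny$ once and the eigenvalue $x$ with multiplicity $n-1$, and since $\det M$ equals the product of the eigenvalues counted with multiplicity, $\det M = x^{n-1}(x+ny)$. Alternatively one may subtract the first row of $M$ from every other row and expand, or invoke the matrix determinant lemma $\det(xI + y\,e e^{\top}) = x^{n}\bigl(1 + \tfrac{y}{x}\,e^{\top}e\bigr)$ for $x \ne 0$, handling $x = 0$ separately; note that here the hypothesis $n > 1$ is what makes $x^{n-1}$ vanish when $x = 0$, so the formula remains valid in that case too.

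Statement (2) is then immediate: $M$ is invertible if and only if $\det M \ne 0$, which by (1) means that neither $x^{n-1}$ nor $x+ny$ vanishes, i.e.\ $x \notin \{0, -ny\}$.

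For (3), assume $M$ is invertible, so by (2) the matrix $\tfrac1x I - \tfrac{y}{x(x+ny)}J$ is well defined. Using only the identity $J^2 = nJ$ (itself a consequence of $Je = ne$), one multiplies out
\[
  (xI + yJ)\Bigl(\tfrac1x I - \tfrac{y}{x(x+ny)}J\Bigr) = I + \Bigl(\tfrac{y}{x} - \tfrac{y}{x+ny} - \tfrac{ny^{2}}{x(x+ny)}\Bigr)J,
\]
and the coefficient of $J$ collapses to $0$ once written over the common denominator $x(x+ny)$; thus the product is $I$ and the displayed matrix is $M^{-1}$. I anticipate no real obstacle here: each computation is short, and the only point demanding a little care is the degenerate case $x=0$ in part (1), where the standing hypothesis $n > 1$ must be invoked.
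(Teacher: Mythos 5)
Your proposal is correct and follows essentially the same route as the paper: both diagonalize $M$ via the rank-one structure $J = ee^{\top}$ (eigenvalue $x+ny$ on $e$, eigenvalue $x$ on $e^{\perp}$) to get (1), deduce (2) immediately, and verify (3) by direct multiplication. The only cosmetic difference is that you phrase the eigenvalue count in terms of the spectrum of $J$ rather than exhibiting the eigenvectors of $M$ directly, and you spell out the $J^2 = nJ$ computation that the paper leaves as ``a straight calculation.''
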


\begin{proof}
  Because $J = ee^T$, it holds that
  \begin{equation*}
    Me = (xI+yee^T)e = (x+ye^Te)e = (x+ny)e
    \quad\text{and}\quad
    Mv = (xI+yee^T)v = xv
  \end{equation*}
  for all $v \in \R^n$ with $v \perp e$. Hence $M$ has the eigenvalue $x$ with
  multiplicity $n-1$ and the simple eigenvalue $x+ny$. This shows (1). (2)\ is
  an immediate consequence of (1). (3) can be verified by a straight
  calculation.
\end{proof}

\section{Matrices with given entry sum and square sum}

Let $\alpha,\beta \in \R$ with $\beta > 0$. We inspect the following set of
matrices:
\begin{equation*}
  \M_{\alpha,\beta} := \{M \in \M : s(M) = n\alpha,\ q(M) = n\beta\}
\end{equation*}

\begin{lemma}\label{ab}
  \hspace{0pt}\begin{enumerate}
  \item If $\alpha^2 > n\beta$, then $\M_{\alpha,\beta} = \emptyset$.
  \item If $\alpha^2 = n\beta$, then $\det M = 0$ for all $M \in
    \M_{\alpha,\beta}$.
  \item If $\alpha^2 \le n\beta$, then there exists an $M \in
    \M_{\alpha,\beta}$ with
    \begin{equation*}
      \det M = \alpha \left(\frac{n\beta-\alpha^2}{n-1}\right)^{\frac{n-1}2}.
    \end{equation*}
  \item If $\alpha^2 \le \beta$, then there exists an $M \in \M_{\alpha,\beta}$
    with $\det M = \beta^{\frac n2}$.
  \end{enumerate}
\end{lemma}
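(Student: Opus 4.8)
The plan is to dispose of (1) and (2) by the Cauchy--Schwarz inequality and to establish (3) and (4) by exhibiting explicit matrices. Regarding the vector of the $n^2$ entries of $M$ and the all-ones vector of $\R^{n^2}$, Cauchy--Schwarz gives $s(M)^2 \le n^2 q(M)$, that is $(n\alpha)^2 \le n^2\cdot n\beta$, hence $\alpha^2 \le n\beta$ for every $M \in \M_{\alpha,\beta}$; this proves (1). For (2), the case $\alpha^2 = n\beta$ forces equality in Cauchy--Schwarz, so all entries of $M$ coincide, and since $s(M) = n\alpha$ they all equal $\alpha/n$, i.e.\ $M = \frac{\alpha}{n}J = 0\cdot I + \frac{\alpha}{n}J$; Lemma~\ref{ij}(1) then yields $\det M = 0$.

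For (3) I would use the ansatz $M := xI + yJ$ with $x,y \in \R$ to be chosen. Every row sum of $M$ equals $x + ny$, so $s(M) = n(x+ny)$, and the entries of $M$ are $x+y$ on the diagonal and $y$ off it, so $q(M) = n(x+y)^2 + n(n-1)y^2$. I impose $x + ny = \alpha$, which makes $s(M) = n\alpha$, and then set $x := \sqrt{\tfrac{n\beta-\alpha^2}{n-1}}$ (well defined because $\alpha^2 \le n\beta$) together with $y := \tfrac{\alpha-x}{n}$. A direct substitution — the one genuinely computational step — shows $q(M) = n\beta$, so $M \in \M_{\alpha,\beta}$, and by Lemma~\ref{ij}(1), $\det M = x^{n-1}(x+ny) = \alpha\, x^{n-1} = \alpha\left(\tfrac{n\beta-\alpha^2}{n-1}\right)^{\frac{n-1}{2}}$.

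For (4) I would take a scaled rotation. Put $u := e/\sqrt{n}$, a unit vector, pick any real unit vector $w \perp u$ (possible since $n > 1$), and let $Q$ be the orthogonal map acting as the rotation by an angle $\theta$ on $\mathrm{span}(u,w)$ and as the identity on its orthogonal complement; then $\det Q = 1$ and $e^TQe = n\,u^TQu = n\cos\theta$. Because $\alpha^2 \le \beta$ and $\beta > 0$ we may choose $\theta$ with $\cos\theta = \alpha/\sqrt{\beta} \in [-1,1]$. Setting $M := \sqrt{\beta}\,Q$, the rows of $Q$ being orthonormal gives $q(M) = \beta\,q(Q) = n\beta$, while $s(M) = \sqrt{\beta}\,e^TQe = n\alpha$ and $\det M = (\sqrt{\beta})^n\det Q = \beta^{n/2}$, as required.

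The only steps that need care are the arithmetic verification in (3) that the chosen $x,y$ give $q(M) = n\beta$ (after eliminating $x = \alpha - ny$ one must recognize $((n-1)x^2+\alpha^2)/n$ as $\beta$), and, in (4), checking that the planar rotation is indeed a well-defined orthogonal matrix of determinant $1$ — in particular that $n \ge 2$ guarantees a vector orthogonal to $e$. Both are routine; the substantive part is just identifying the right families of matrices, namely the ``balanced'' matrices $xI+yJ$ for (3) and the scaled rotations for (4).
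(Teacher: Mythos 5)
Your proposal is correct. Parts (1)--(3) follow the paper essentially verbatim: the same Cauchy--Schwarz argument on the entries viewed as vectors in $\R^{n^2}$ for (1) and (2), and the same ansatz $\gamma I + \frac1n(\alpha-\gamma)J$ with $\gamma = \bigl(\frac{n\beta-\alpha^2}{n-1}\bigr)^{1/2}$ for (3), evaluated via Lemma~\ref{ij}. Part (4) is where you genuinely diverge. The paper assembles $M$ as a block-diagonal matrix from explicit $2\times2$ scaled-rotation blocks $A$ (plus one $3\times3$ block $B$ when $n$ is odd), which forces a case split on the parity of $n$ and a further reduction for $\alpha<0$ via negation and a row swap. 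Your single scaled rotation $M=\sqrt\beta\,Q$, with $Q$ rotating the plane spanned by $u=e/\sqrt n$ and a unit vector $w\perp u$ through an angle $\theta$ with $\cos\theta=\alpha/\sqrt\beta$, handles all $n\ge2$ and both signs of $\alpha$ uniformly: orthonormality of the rows gives $q(M)=n\beta$ for free, $s(M)=\sqrt\beta\,n\cos\theta=n\alpha$ by construction, and $\det M=\beta^{n/2}$ since $\det Q=1$. What each approach buys: the paper's matrices are completely explicit (concrete numerical entries, no appeal to an abstract orthogonal map), which may be preferable for a reader wanting to see the extremal matrices; yours is shorter and conceptually cleaner, at the cost of having to note that the planar rotation extended by the identity, e.g.\ $Q = I + (\cos\theta-1)(uu^T+ww^T) + \sin\theta\,(wu^T-uw^T)$, is indeed orthogonal with determinant $1$ --- which, as you say, is routine.
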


\begin{proof}
  (1)\ Suppose $\M_{\alpha,\beta} \neq \emptyset$, say $M \in \M_{\alpha,\beta}$.
  Reading $M$ and $J$ as elements of $\R^{n^2}$, Cauchy's inequality gives:
  \begin{equation*}
    \alpha^2
    = \frac1{n^2} \Big(\sum_{i,j=1}^n M_{i,j}\Big)^2
    = \frac1{n^2} \left<M,J\right>^2
    \le \frac1{n^2} {\|M\|}_2^2 \, {\|J\|}_2^2
    = \sum_{i,j=1}^n M_{i,j}^2
    = n\beta
  \end{equation*}
  (2)\ For $\alpha^2 = n\beta$ and $M \in \M_{\alpha,\beta}$, the calculation
  in (1) shows $|\left<M,J\right>| = {\|M\|}_2 \, {\|J\|}_2$. But this holds
  only if $M$ is a scalar multiple of $J$, so we have $\det M = 0$ because of
  $\det J = 0$.

  (3) Suppose $\alpha^2 \le n\beta$. With $\gamma :=
  \left(\frac{n\beta-\alpha^2}{n-1}\right)^\frac12$ set $M := \gamma I +
  \frac1n(\alpha-\gamma)J$. Then $M \in \M_{\alpha,\beta}$, and by Lemma
  \ref{ij}:
  \begin{equation*}
    \det M
    = \gamma^{n-1}\left(\gamma+n\tfrac1n(\alpha-\gamma)\right)
    = \alpha \gamma^{n-1}
  \end{equation*}
  (4) Suppose $\alpha^2 \le \beta$. In case of $\alpha \ge 0$, set $\gamma :=
  \frac12\big(\frac{3\alpha}{\sqrt{\beta}}-1\big)$, so $\gamma^2 \le 1$. Set
  \begin{equation*}
    A := \begin{pmatrix}
      \alpha  & \sqrt{\beta-\alpha^2}\\
      -\sqrt{\beta-\alpha^2} & \alpha
    \end{pmatrix}
    \quad\text{and}\quad
    B := \sqrt{\beta}
    \begin{pmatrix}
      \gamma & \sqrt{1-\gamma^2} & 0\\
      -\sqrt{1-\gamma^2} & \gamma & 0\\
      0 & 0 & 1
    \end{pmatrix}.
  \end{equation*}
  Then $s(A) = 2\alpha$, $q(A) = 2\beta$, $\det A = \beta$, $s(B) = 3\alpha$,
  $q(B) = 3\beta$, $\det B = \beta^\frac32$.  In case of $n = 2k$ with $k \in
  \N$, use $k$ copies of $A$ to build the block matrix
  \begin{equation*}
    M := \begin{pmatrix}
      A \\
      & \ddots\\
      && A
    \end{pmatrix},
  \end{equation*}
  which has the required properties. In case of $n = 2k+1$ with $k \in \N$,
  use $k-1$ copies of $A$ to build the block matrix
  \begin{equation*}
    M :=  \begin{pmatrix}
      A\\
      & \ddots\\
      && A\\
      &&& B
    \end{pmatrix},
  \end{equation*}
  which again satisfies the requirements.

  In case of $\alpha < 0$, an $M' \in \M_{-\alpha,\beta}$ with $\det M' =
  \beta^\frac n2$ exists. For even $n$, $M := -M' \in \M_{\alpha,\beta}$ has
  the requested determinant, while for odd $n$ swapping two rows of $-M'$ gives
  the desired matrix $M$.
\end{proof}

In the proofs of (3) and (4) of Lemma \ref{ab} we have specified matrices
whose determinants will below turn out to be the greatest possible. The
determinant values relate like following:

\begin{lemma}\label{relate}
  For $\alpha^2 \le n\beta$ the inequality
  \begin{equation*}
    |\alpha| \left(\frac{n\beta-\alpha^2}{n-1}\right)^{\frac{n-1}2} \le \beta^\frac n2
  \end{equation*}
  holds, with equality if and only if $\alpha^2 = \beta$.
\end{lemma}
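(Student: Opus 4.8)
The plan is to recognize the left-hand side as (the square root of) a product of $n$ nonnegative numbers whose arithmetic mean is $\beta$, and then apply the AM--GM inequality. First I would square both sides, which is legitimate since both are nonnegative and $\beta > 0$; this reduces the claim to
\[
  \alpha^2 \left(\frac{n\beta-\alpha^2}{n-1}\right)^{n-1} \le \beta^n .
\]
Now consider the $n$ real numbers consisting of $\alpha^2$ together with $n-1$ copies of $\frac{n\beta-\alpha^2}{n-1}$. All of them are nonnegative: clearly $\alpha^2 \ge 0$, and $\frac{n\beta-\alpha^2}{n-1} \ge 0$ precisely because of the hypothesis $\alpha^2 \le n\beta$. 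Their sum is $\alpha^2 + (n-1)\cdot\frac{n\beta-\alpha^2}{n-1} = n\beta$, so their arithmetic mean equals $\beta$.

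The main step is then a direct invocation of AM--GM: the geometric mean of these $n$ nonnegative numbers does not exceed their arithmetic mean, i.e.
\[
  \left(\alpha^2 \left(\frac{n\beta-\alpha^2}{n-1}\right)^{n-1}\right)^{1/n} \le \beta ,
\]
and raising to the $n$-th power gives the squared inequality above; a final square root yields the assertion. For the equality case, AM--GM holds with equality exactly when all $n$ numbers coincide, i.e. $\alpha^2 = \frac{n\beta-\alpha^2}{n-1}$, which rearranges to $n\alpha^2 = n\beta$, that is $\alpha^2 = \beta$. Conversely, if $\alpha^2 = \beta$ then all the numbers equal $\beta$ and both sides of the original inequality become $\beta^{n/2}$.

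I do not anticipate any genuine obstacle here. The only points deserving a word of care are the nonnegativity check that simultaneously legitimizes the AM--GM application and the squaring and square-root steps, and the observation that $\beta > 0$ prevents the degenerate situation $\alpha^2 = \beta = 0$ (so that when $\alpha = 0$ one has $0 < \beta^{n/2}$, a strict inequality, consistent with $\alpha^2 = 0 \neq \beta$).
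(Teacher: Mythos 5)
Your proof is correct and is essentially the paper's own argument: the paper first normalizes by setting $x := \alpha^2/\beta$ and then applies AM--GM to $x$ together with $n-1$ copies of $\frac{n-x}{n-1}$, which is exactly your collection of $n$ numbers after dividing each by $\beta$. The only cosmetic difference is that you skip the normalization step and work with the unscaled quantities directly.
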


\begin{proof}
  With $f(x) := x \big(\frac{n-x}{n-1}\big)^{n-1}$ for $x \in [0,n]$ we have
  \begin{equation*}
    |\alpha| \left(\frac{n\beta-\alpha^2}{n-1}\right)^{\frac{n-1}2} \beta^{-\frac n2}
    = \sqrt{f\big(\tfrac{\alpha^2}\beta\big)}\,.
  \end{equation*}
  The proof is completed by applying the AM-GM inequality to ${f(x)}^{1/n}$:
  \begin{equation*}
    f{(x)}^\frac1n
    = \textstyle \left(x \left(\frac{n-x}{n-1}\right)^{n-1}\right)^\frac1n
    \le \frac1n \left(x + (n-1) \frac{n-x}{n-1}\right)
    = 1
  \end{equation*}
  with equality if and only if $x = \frac{n-x}{n-1}$, i.\,e.\ if and only if $x
  = 1$.
\end{proof}

\section{Main results}

Let $\alpha,\beta \in \R$ with $\beta > 0$. By Lemma \ref{ab} there exists an
$M \in \M_{\alpha,\beta}$ with $\det M \ne 0$ if and only if $\alpha^2 <
n\beta$. By possibly swapping two rows of $M$, $\det M > 0$ can be achieved. As
$\M_{\alpha,\beta}$ is compact, the determinant function assumes a maximum
value on $\M_{\alpha,\beta}$. Gasper's theorem provides insight into the
properties of the matrices with maximal determinant:

\begin{theorem}[O. Gasper, 2009]\label{theorem}
  Let $\alpha^2 < n\beta$ and $M \in \M_{\alpha,\beta}$ with maximal
  determinant. Then
  \begin{eqnarray*}
    \text{if}\ \alpha^2 \le \beta&:&
    \begin{cases}
      \ (A) & MM^T = \beta I\\[0.25em]
      \ (B) & \det M = \beta^{\frac n2}
    \end{cases}\\
    \text{if}\ \alpha^2 \ge \beta&:&
    \begin{cases}
      \ (C) & s(M_i) = \alpha \quad\text{and}\quad s(M^j) = \alpha \quad
      \text{for all $i,j \in N$}\\[0.25em]
      \ (D) & MM^T = (\beta-\delta) I + \delta J \quad
      \text{with $\delta := \frac{\alpha^2-\beta}{n-1}$, so $\beta-\delta =
        \frac{n\beta-\alpha^2}{n-1}$.}\\[0.25em]
      \ (E) & \det M = |\alpha| \left(\beta-\delta\right)^{\frac{n-1}2}\\[0.25em]
    \end{cases}
  \end{eqnarray*}
\end{theorem}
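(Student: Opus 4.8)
The plan is to regard $M$ as a solution of the constrained optimisation problem ``maximise $\det$ on $\M_{\alpha,\beta}$'' and to read off the structure of $M$ from the first-order (Lagrange) conditions. As a preliminary, recall that by Lemma \ref{ab} (and a possible swap of two rows) the compact set $\M_{\alpha,\beta}$ contains a matrix of positive determinant, so the maximiser $M$ is invertible; moreover, if $\alpha^2\le\beta$ then $\det M\ge\beta^{n/2}$, again by Lemma \ref{ab}. Viewed on $\R^{n^2}$, the constraints $\sum_{i,j}M_{i,j}=n\alpha$ and $\sum_{i,j}M_{i,j}^2=n\beta$ have gradients $J$ and $2M$, which are linearly independent at the maximiser, since otherwise $M$ would be a scalar multiple of $J$, hence singular, contradicting $\det M>0$. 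As $M$ is invertible, the gradient of $\det$ at $M$ equals $(\det M)\,(M^T)^{-1}$ by Jacobi's formula, so the Lagrange rule provides real numbers $\lambda,\mu$ with
\begin{equation*}
  (\det M)\,(M^T)^{-1}=\lambda J+2\mu M .
\end{equation*}

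Write $d:=\det M>0$. The decisive step is to multiply this identity by $M^T$, on the right and on the left, using $J=ee^T$:
\begin{equation*}
  \lambda\,e(Me)^T+2\mu\,MM^T=dI
  \qquad\text{and}\qquad
  \lambda\,(M^Te)e^T+2\mu\,M^TM=dI .
\end{equation*}
Since $dI$ and $MM^T$ are symmetric, so is $\lambda\,e(Me)^T$, whence $\lambda\bigl(e(Me)^T-(Me)e^T\bigr)=0$, and likewise $\lambda\bigl((M^Te)e^T-e(M^Te)^T\bigr)=0$. Thus, if $\lambda\neq0$, both $Me$ and $M^Te$ are constant vectors, and as their entries sum to $n\alpha$ this forces $Me=M^Te=\alpha e$, which is statement (C). Feeding $Me=\alpha e$ back into the first identity shows $MM^T=xI+yJ$ for some $x,y\in\R$; taking the trace gives $n(x+y)=q(M)=n\beta$, and applying $MM^T$ to $e$ gives $(x+ny)\,e=MM^Te=M(M^Te)=\alpha^2e$, so $x+ny=\alpha^2$. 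Solving yields $y=\delta$ and $x=\beta-\delta=\tfrac{n\beta-\alpha^2}{n-1}>0$, which is (D), and then Lemma \ref{ij}(1) gives $d^2=\det(MM^T)=(\beta-\delta)^{n-1}\bigl(\beta+(n-1)\delta\bigr)=\alpha^2(\beta-\delta)^{n-1}$, hence $d=|\alpha|(\beta-\delta)^{(n-1)/2}$, which is (E).

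If instead $\lambda=0$, then the first identity forces $MM^T=(d/2\mu)I$ with $\mu\neq0$ (else $dI=0$), and comparing traces gives $MM^T=\beta I$, i.e.\ (A), whence $d^2=\beta^n$, i.e.\ (B). In this case $MM^T=\beta I$ also entails $M^TM=\beta I$, and Cauchy's inequality applied to $M^Te$, whose entries sum to $n\alpha$ and whose squared Euclidean norm $e^TMM^Te$ equals $n\beta$, gives $\alpha^2\le\beta$ with equality forcing $M^Te=\alpha e$, and similarly $Me=\alpha e$. So $\lambda=0$ cannot occur when $\alpha^2>\beta$, and when $\alpha^2=\beta$ it yields $\delta=0$ and all of (C)--(E) as well. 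Matching the two alternatives to the hypotheses is now routine: if $\alpha^2\ge\beta$, then either $\lambda\neq0$, giving (C)--(E) directly, or $\lambda=0$, which forces $\alpha^2=\beta$ and hence all of (A)--(E); if $\alpha^2\le\beta$, then either $\lambda=0$, giving (A),(B) directly, or $\lambda\neq0$, in which case (E) and Lemma \ref{relate} give $d=|\alpha|(\beta-\delta)^{(n-1)/2}\le\beta^{n/2}$ whereas $d\ge\beta^{n/2}$, so equality holds throughout, Lemma \ref{relate} forces $\alpha^2=\beta$, and then $\delta=0$ makes (D),(E) collapse to (A),(B).

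The main obstacle I anticipate is exactly this final bookkeeping: the first-order conditions alone do not determine which of the two branches ($\lambda=0$ or $\lambda\neq0$) actually occurs, so one must invoke the explicit extremal matrices of Lemma \ref{ab} (through the sharp comparison in Lemma \ref{relate}) both to exclude the wrong branch off the overlap $\alpha^2=\beta$ and to verify that on the overlap either branch delivers all of (A)--(E). This is, I believe, precisely the point in the argument of \cite{gasper} that needs repair.
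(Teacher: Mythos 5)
Your proposal is correct and follows essentially the same route as the paper: the Lagrange identity $(\det M)I=\lambda JM^T+2\mu MM^T$ obtained from Jacobi's formula, the symmetry argument forcing constant row and column sums when $\lambda\ne0$, the Cauchy/RMS argument in the $\lambda=0$ branch, and the use of Lemmas \ref{ab} and \ref{relate} to match the two branches to $\alpha^2\le\beta$ and $\alpha^2\ge\beta$, including the overlap $\alpha^2=\beta$. The only differences are cosmetic: you determine $MM^T=xI+yJ$ by taking the trace and acting on $e$ instead of the paper's entrywise computation, and you leave implicit that $\mu\ne0$ in the $\lambda\ne0$ branch (needed to divide by $2\mu$), which is immediate since $\mu=0$ would give $dI=\lambda\alpha J$, impossible for $\det M>0$ and $n\ge2$.
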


\begin{proof}
  From Lemma \ref{ab} we know that $\det M > 0$. The matrix $M$ solves an
  extremum problem with equality contraints
  \begin{equation*}
    \text{(P)} \quad\quad
    \begin{cases}
      \ \det X \longrightarrow \max\\
      \ s(X) = n\alpha\\
      \ q(X) = n\beta
    \end{cases}
    (X \in \M^*),
  \end{equation*}
  where $\M^*$ is the set of invertible matrices. The Lagrange function of (P)
  is given by
  \begin{equation*}
  L(X,\lambda,\mu) = \det X - \lambda(s(X)-n\alpha) - \mu(q(X)-n\beta),
  \end{equation*}
  so there exist $\lambda,\mu\in \R$ with $\frac d{dM_{i,j}} L(M,\lambda,\mu) =
  0$ for all $i,j \in N$. By Jacobi's formula
  \begin{equation*}
  \Big(\frac d{dM_{i,j}} \det M\Big)_{i,j} = (\det M) \, {(M^T)}^{-1}
  \end{equation*}
  we get\footnote{This is where a clerical mistake happened in \cite{gasper}.
    Here we have corrected $-\lambda M - 2\mu J$ by $-\lambda J - 2\mu M$ and
    adapted the remainder of the proof accordingly.} $(\det M) \, {(M^T)}^{-1}
  - \lambda J - 2\mu M = 0$, i.\,e.
  \begin{equation}\label{t1}
  (\det M)I = \lambda JM^T + 2\mu MM^T.
  \end{equation}
  Suppose $\mu = 0$. Then applying the determinant function to (\ref{t1}) and
  using $\det J = 0$ would give ${(\det M)}^n = \det(\lambda JM^T) = \det(J)
  \det(\lambda M) = 0$, a contradiction to $\det M > 0$. Hence
  \begin{equation}\label{t2}
    \mu \ne 0.
  \end{equation}
  As $JM^T$ has the diagonal elements $s(M_1),\dots,s(M_n)$, and $MM^T$ has the
  diagonal elements $q(M_1),\dots,q(M_n)$, we get $n \det M = \lambda s(M) +
  2\mu q(M) = \lambda n\alpha + 2\mu n\beta$ by applying the trace function to
  (\ref{t1}), consequently
  \begin{equation}\label{t3}
  \det M = \lambda\alpha + 2\mu\beta.
  \end{equation}
  The symmetry of $(\det M)I$ and the symmetry of $2\mu MM^T$ in (\ref{t1})
  show that $\lambda JM^T$ is symmetric. As all rows of $JM^T$ are identical,
  namely equal to $\left(s(M_1),\dots,s(M_n)\right)$, we obtain
  \begin{equation}\label{t4}
    \lambda s(M_1) = \dots = \lambda s(M_n).
  \end{equation}
  In the following, we inspect the cases $\lambda = 0$ and $\lambda \ne 0$ and prove:
  \begin{equation}\label{t5}
    \begin{cases}
      \ \lambda =   0 \quad\Longrightarrow\quad \alpha^2 \le \beta\ \wedge\ (A)\ \wedge\ (B)\\
      \ \lambda \ne 0 \quad\Longrightarrow\quad \alpha^2 \ge \beta\ \wedge\ (C)\ \wedge\ (D)\ \wedge\ (E)
    \end{cases}
  \end{equation}
  Case $\lambda = 0$: Then (\ref{t3}) reads $\det M = 2\mu\beta$, so taking
  (\ref{t2}) into account and dividing (\ref{t1}) by $2\mu$ gives $\beta I =
  MM^T$, i.\,e.\ (A). From this, (B) follows by applying the determinant
  function. Using the inequality between arithmetic mean and root mean square
  and the fact that the matrix $(1/\sqrt{\beta}) M$ is orthogonal and thus an
  isometry w.r.t.\ the euclidean norm ${\|\ \|}_2$, we get
  \begin{equation}\label{t6}
    \alpha^2
    = \Big(\frac1n \sum_{i=1}^n s(M_i)\Big)^2
    \le \frac1n \sum_{i=1}^n {s(M_i)}^2
    = \frac1n \, {\|Me\|}_2^2
    = \frac1n \, \beta {\|e\|}_2^2
    = \frac1n \, \beta n
    = \beta.
  \end{equation}
  Case $\lambda \ne 0$: Then $s(M_1) = \dots = s(M_n)$ by (\ref{t4}). With
  $s(M_1) + \dots + s(M_n) = s(M) = n\alpha$ this shows $s(M_i) = \alpha$ for
  all $i \in N$. Using $(\det M)I = \lambda M^TJ + 2\mu M^TM$ instead of (1)
  yields $s(M^j) = \alpha$ for all $j \in N$, so (C) is done. Furthermore,
  $JM^T = \alpha J$, and (\ref{t1}) becomes
  \begin{equation}\label{t7}
  2\mu MM^T = (\det M)I - \lambda\alpha J,
  \end{equation}
  hence $q(M_i) = (MM^T)_{i,i} = (\det M - \lambda\alpha)/(2\mu)$ for all
  $i \in N$, so $q(M_1) = \dots = q(M_n)$. Using $q(M_1) + \dots + q(M_n) =
  q(M) = n\beta$ shows
  \begin{equation}\label{t8}
  {(MM^T)}_{i,i} = q(M_i) = \beta \quad \text{for all $i \in N$.}
  \end{equation}
  Let $i,j \in N$ with $i \ne j$. Then (\ref{t7}) gives ${(MM^T)}_{i,k} =
  -\lambda\alpha/(2\mu)$ for all $k \in N \setminus \{i\}$. With
  \begin{equation*}
    \sum_{k=1}^n {(MM^T)}_{i,k}
    = \sum_{k=1}^n \sum_{p=1}^n M_{i,p} \, M_{k,p}
    = \sum_{p=1}^n M_{i,p} \, s(M^p)\\
    = \sum_{p=1}^n M_{i,p} \, \alpha
    = s(M_i) \, \alpha
    = \alpha^2
  \end{equation*}
  and (\ref{t8}) we get
  \begin{equation*}
    {(MM^T)}_{i,j}
    = \frac1{n-1} \sum_{k \ne i} {(MM^T)}_{i,k}
    = \frac1{n-1} \left( \sum_{k=1}^n {(MM^T)}_{i,k} - {(MM^T)}_{i,i} \right)
    = \frac{\alpha^2 - \beta}{n-1} = \delta,
  \end{equation*}
  which, again with (\ref{t8}), proves (D). With Lemma \ref{ij} this
  yields
  \begin{equation*}
    {(\det M)}^2
    = \det(MM^T)
    = (\beta-\delta)^{n-1} (\beta-\delta+n\delta)
    = (\beta-\delta)^{n-1} \alpha^2 ,
  \end{equation*}
  and taking the square root gives (E). Suppose $\alpha^2 < \beta$. Then by
  Lemma \ref{ab} there would exist an $M' \in \M_{\alpha,\beta}$ with $\det M'
  = \beta^\frac n2$, and by Lemma \ref{relate}
  \begin{equation*}
    \det M
    = |\alpha| {(\beta-\delta)}^{\frac{n-1}2}
    < \beta^{\frac n2}
    = \det M',
  \end{equation*}
  which contradicts the maximality of $\det M$. Hence $\alpha^2 \ge \beta$.

  We have now proved (\ref{t5}) and are ready to deduce the statements of the
  theorem: If $\alpha^2 < \beta$, then (\ref{t5}) shows that $\lambda = 0$ and
  thus (A) and (B). If $\alpha^2 > \beta$, then (\ref{t5}) shows that $\lambda
  \ne 0$ and thus (C), (D) and (E). Finally suppose $\alpha^2 = \beta$. Then
  $\delta = 0$, hence (A) $\Longleftrightarrow$ (D) and (B)
  $\Longleftrightarrow$ (E). If $\lambda \ne 0$, then (\ref{t5}) shows (C), (D)
  and (E), from which (A) and (B) follow. If $\lambda = 0$, then (\ref{t5})
  shows (A) and (B), from which (D) and (E) follow. It remains to prove (C) in
  the case of $\alpha^2 = \beta$ and $\lambda = 0$. To this purpose, look at
  (\ref{t6}) again, where $\alpha^2 = \beta$ shows that $s(M_1) = \dots =
  s(M_n)$, and (C) follows as in the case $\lambda \ne 0$.
\end{proof}

For calculating upper bounds for the determinants of given matrices, we note
this handy consequence of Theorem \ref{theorem}:

\begin{proposition}\label{real}
  Let $M \in \M$, $\alpha := \frac1n s(M)$, $\beta := \frac1n q(M)$, $\kappa :=
  \frac{n\beta-\alpha^2}{n-1}$. Then
  \begin{eqnarray*}
    \text{if}\ \alpha^2 < \beta &:& |\det M| \le \beta^{\frac n2}\\
    \text{if}\ \alpha^2 = \beta &:& |\det M| \le |\alpha| \kappa^{\frac{n-1}2} = \beta^{\frac n2}\\
    \text{if}\ \alpha^2 > \beta &:& |\det M| \le |\alpha| \kappa^{\frac{n-1}2} < \beta^{\frac n2}
  \end{eqnarray*}
\end{proposition}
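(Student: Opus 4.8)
The plan is to reduce the statement to Theorem \ref{theorem} applied to a matrix of maximal determinant in $\M_{\alpha,\beta}$, and then to read off the comparison of the two candidate bounds from Lemma \ref{relate}. The only genuine work lies in handling the boundary values of the ratio $\alpha^2/\beta$.

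First I would dispose of the degenerate cases. If $q(M)=0$, then $M$ is the zero matrix and every inequality is trivial, so assume $\beta>0$. As noted in the proof of Lemma \ref{ab}(1), Cauchy's inequality gives $\alpha^2\le n\beta$ for every $M\in\M$. If $\alpha^2=n\beta$, then Lemma \ref{ab}(2) yields $\det M=0$; moreover $n>1$ and $\beta>0$ force $\alpha^2=n\beta>\beta$, so we are in the third case, where $\kappa=0$ and hence $|\alpha|\kappa^{(n-1)/2}=0=|\det M|<\beta^{n/2}$, as claimed.

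Now suppose $\alpha^2<n\beta$, so that $M\in\M_{\alpha,\beta}$. As recalled before Theorem \ref{theorem}, the compact set $\M_{\alpha,\beta}$ admits a determinant-maximising matrix $M_0$, and by Lemma \ref{ab} we may assume $\det M_0>0$. Swapping two rows of $M$ keeps it in $\M_{\alpha,\beta}$ while negating its determinant, so $|\det M|\le\det M_0$, and it suffices to evaluate $\det M_0$. Theorem \ref{theorem} applies to $M_0$: in the case $\alpha^2\le\beta$, part (B) gives $\det M_0=\beta^{n/2}$; in the case $\alpha^2\ge\beta$, part (E) gives $\det M_0=|\alpha|(\beta-\delta)^{(n-1)/2}=|\alpha|\kappa^{(n-1)/2}$, since $\beta-\delta=(n\beta-\alpha^2)/(n-1)=\kappa$.

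It remains to assemble the three cases with the help of Lemma \ref{relate}, which asserts $|\alpha|\kappa^{(n-1)/2}\le\beta^{n/2}$, with equality exactly when $\alpha^2=\beta$. If $\alpha^2<\beta$, part (B) gives $|\det M|\le\det M_0=\beta^{n/2}$. If $\alpha^2=\beta$, part (B) gives $\det M_0=\beta^{n/2}$, and Lemma \ref{relate} identifies this with $|\alpha|\kappa^{(n-1)/2}$, yielding the chain of equalities in the middle case. If $\beta<\alpha^2<n\beta$, only part (E) is available, giving $\det M_0=|\alpha|\kappa^{(n-1)/2}$, and Lemma \ref{relate} now supplies the strict inequality $|\alpha|\kappa^{(n-1)/2}<\beta^{n/2}$. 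The main point requiring care is precisely this boundary bookkeeping: the non-generic value $\alpha^2=n\beta$, which lies outside the scope of Theorem \ref{theorem} and must be treated by hand, and the value $\alpha^2=\beta$, where the equality clause of Lemma \ref{relate} is what reconciles the two forms of the bound.
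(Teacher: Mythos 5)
Your argument is correct and follows the same route as the paper: dispose of the case $\det M=0$ (equivalently $\alpha^2=n\beta$ or $\beta=0$) by hand, then for $\det M\ne 0$ invoke Lemma \ref{ab} to get $\alpha^2<n\beta$ and read the bounds off Theorem \ref{theorem} applied to a determinant-maximising element of $\M_{\alpha,\beta}$, with Lemma \ref{relate} comparing the two bounds. The paper's proof is just a terser version of exactly this; your explicit treatment of the boundary cases $\alpha^2=n\beta$ and $\alpha^2=\beta$ fills in details the paper leaves to the reader.
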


\begin{proof}
  This is trivial if $\det M = 0$. In case of $\det M \ne 0$ we get $\alpha^2 <
  n\beta$ by Lemma \ref{ab}, and the stated inequalities are true by Lemma
  \ref{relate} and Theorem \ref{theorem}.
\end{proof}

Note that Lemma \ref{relate} says that $|\alpha|\kappa^{\frac{n-1}2} <
\beta^{\frac n2}$ is true in case of $\alpha^2 < \beta$, too. However, as the
following examples demonstrates, $|\det M|$ is not necessarily bounded by the
left hand side in this situation:
\begin{equation*}
    M :=  \begin{pmatrix}
      1 & 0\\
      0 & -1
    \end{pmatrix}\ ,\
    |\det M| = 1 \ ,\
    |\alpha|\kappa^{\frac{n-1}2} = 0
\end{equation*}
Proposition \ref{real} can be used to derive bounds for the determinants of
complex matrices also:

\begin{corollary}\label{complex}
  Let $A,B \in \M$, $M := A + iB$, $\alpha := \frac1n s(A)$, $\beta := \frac1n
  (q(A)+q(B))$, $\kappa := \frac{2n\beta-\alpha^2}{2n-1}$. Then
  \begin{eqnarray*}
    \text{if}\ \alpha^2 < \beta &:& |\det M| \le \beta^{\frac n2}\\
    \text{if}\ \alpha^2 = \beta &:& |\det M| \le {|\alpha|}^\frac12 \kappa^\frac{2n-1}4 = \beta^{\frac n2}\\
    \text{if}\ \alpha^2 > \beta &:& |\det M| \le {|\alpha|}^\frac12 \kappa^\frac{2n-1}4 < \beta^{\frac n2}
  \end{eqnarray*}
\end{corollary}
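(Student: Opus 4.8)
The plan is to reduce the complex statement to the real one (Proposition~\ref{real}) by the classical realification of a complex matrix. To $M = A + iB$ I associate the real $2n \times 2n$ matrix
\[
  \widetilde M := \begin{pmatrix} A & -B \\ B & A \end{pmatrix}.
\]
The first step is to recall that $\det \widetilde M = {|\det M|}^2$: conjugating $\widetilde M$ by the unipotent, hence determinant-$1$, matrix $\bigl(\begin{smallmatrix} I & iI \\ 0 & I \end{smallmatrix}\bigr)$ turns it into the block lower triangular matrix $\bigl(\begin{smallmatrix} A+iB & 0 \\ B & A-iB \end{smallmatrix}\bigr)$, whose determinant is $\det(A+iB)\,\det(A-iB) = (\det M)\,\overline{(\det M)} = {|\det M|}^2$.

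The second step is to read off the invariants of $\widetilde M$. Summing the entries of the four blocks gives $s(\widetilde M) = s(A) - s(B) + s(B) + s(A) = 2\,s(A)$, and summing their squares gives $q(\widetilde M) = 2\,q(A) + 2\,q(B)$. Hence, if $\widetilde\alpha := \tfrac1{2n} s(\widetilde M)$, $\widetilde\beta := \tfrac1{2n} q(\widetilde M)$ and $\widetilde\kappa := \tfrac{2n\widetilde\beta - \widetilde\alpha^2}{2n-1}$ denote the quantities that Proposition~\ref{real} attaches to the $2n\times 2n$ matrix $\widetilde M$, then $\widetilde\alpha = \alpha$, $\widetilde\beta = \beta$ and $\widetilde\kappa = \kappa$. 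In particular the trichotomy $\alpha^2 < \beta$, $\alpha^2 = \beta$, $\alpha^2 > \beta$ for $M$ matches verbatim the three cases of Proposition~\ref{real} applied to $\widetilde M$.

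The third step is to apply Proposition~\ref{real} to $\widetilde M$ (legitimate, since that result holds for every matrix size exceeding $1$, and $2n > 1$) and take square roots. For instance, in the case $\alpha^2 < \beta$ it yields ${|\det M|}^2 = \det\widetilde M \le \widetilde\beta^{\,2n/2} = \beta^n$, so ${|\det M|} \le \beta^{n/2}$; in the case $\alpha^2 = \beta$ it gives ${|\det M|}^2 \le {|\alpha|}\,\kappa^{(2n-1)/2} = \beta^n$, hence ${|\det M|} \le {|\alpha|}^{1/2}\kappa^{(2n-1)/4} = \beta^{n/2}$; and the strict inequality of the case $\alpha^2 > \beta$ transfers in the same fashion. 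The degenerate possibility $\beta = 0$ forces $A = B = 0$ and is trivial. There is no genuinely hard step here: the only points requiring care are the identity $\det\widetilde M = {|\det M|}^2$ and the bookkeeping of the exponents under the substitution $n \mapsto 2n$ followed by a square root.
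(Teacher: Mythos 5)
Your proof is correct and follows essentially the same route as the paper: realify $M$ to a $2n\times 2n$ block matrix with the same $\alpha$, $\beta$, $\kappa$, apply Proposition \ref{real}, and take square roots via $\det\widetilde M = |\det M|^2$. The only differences are cosmetic — you use the block arrangement $\bigl(\begin{smallmatrix} A & -B \\ B & A \end{smallmatrix}\bigr)$ instead of the paper's $\bigl(\begin{smallmatrix} A & B \\ -B & A \end{smallmatrix}\bigr)$ (they have equal determinants), and you prove the identity $\det\widetilde M = |\det M|^2$ by block triangularization where the paper cites a reference.
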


\begin{proof}
  For the real $2n \times 2n$-matrix
  \begin{equation*}
    M' := \begin{pmatrix}
      A & B\\
      -B & A
    \end{pmatrix}
  \end{equation*}
  we have $s(M') = 2s(A)$ and $q(M') = 2q(A)+2q(B))$, hence $\alpha' :=
  \frac1{2n} s(M') = \frac1n s(A) = \alpha$ and $\beta' := \frac1{2n} q(M') =
  \frac1n (q(A) + q(B)) = \beta$, and Proposition \ref{real} applied to $M'$
  gives
  \begin{eqnarray*}
    \text{if}\ \alpha^2 < \beta &:& |\det M'| \le \beta^{\frac{2n}2}\\
    \text{if}\ \alpha^2 = \beta &:& |\det M'| \le |\alpha| \kappa^{\frac{2n-1}2} = \beta^{\frac{2n}2}\\
    \text{if}\ \alpha^2 > \beta &:& |\det M'| \le |\alpha| \kappa^{\frac{2n-1}2} < \beta^{\frac{2n}2}
  \end{eqnarray*}
  The claimed inequalities follow by using $\det M' = {|\det M|}^2$, see
  \cite{bernstein}, Fact 3.24.7 vii).
\end{proof}

To get the more attractive case of $\alpha^2 > \beta$ in Corollary
\ref{complex}, it can help to recall the equality $|\det(A+iB)| = |\det(B+iA)|$
and apply Corollary \ref{complex} to the latter matrix. As an example take $A
:= \left(\begin{smallmatrix} 0 & 0\\ 0 & 0 \end{smallmatrix}\right)$ and $B :=
\left(\begin{smallmatrix} 1 & 1\\ 1 & 1 \end{smallmatrix}\right)$. For $A+iB$
we get $\alpha = 0$, $\beta = 2$ and the bound $|\det(A+iB)| \le 2$. But $B+iA$
gives $\alpha = 2$, $\beta = 2$ and the bound $|\det(B+iA)| \le 4 \cdot
27^{-\frac14} \approx 1.75$.

For a real matrix $M$, i.\,e. $B = 0$, Corollary \ref{complex} can yield a
larger bound than Proposition \ref{real}, and it can give different bounds for
$M$ and for $iM$. For example the matrix $M := \left(\begin{smallmatrix} 1 &
  1\\ 0 & 1 \end{smallmatrix}\right)$ gets the bound $\frac34\sqrt3 \approx
1.30$ from Proposition \ref{real} and the bound $\frac14 125^\frac14 \sqrt{3}
\approx 1.45$ from Corollary \ref{complex}, while Corollary \ref{complex}
applied to $iM$ gives the bound $1.5$. This unhappy situation prompts for
\begin{question}
    Is there a better way to transfer Proposition \ref{real} to complex
    matrices?
\end{question}

\section{Applications}

The bounds for $\det M$ in Proposition \ref{real} and Corollary \ref{complex}
refer only to $s(M)$ and $q(M)$ and so do not use any positional information.
As sections \ref{sectionhadamard}--\ref{sectionkoch} show, they can still serve
for deducing interesting inequalities. But the strength of Proposition
\ref{real} manifests better when it is applied to problems like in sections
\ref{sectionarith1} and \ref{sectionarith2}.

\subsection{Hadamard's inequality}\label{sectionhadamard}

For a complex $n \times n$ matrix $M$ with $\left|M_{i,j}\right| \le 1$ for all
$i,j \in N$, Corollary \ref{complex} shows:
\begin{equation*}
  |\det M|
  \le \beta^{\frac n2}
  = \left(\frac1n \sum_{i,j=1}^n {|M_{i,j}|}^2\right)^{\frac n2}
  \le \left(\frac1n \sum_{i,j=1}^n 1\right)^{\frac n2}
  = n^{\frac n2}
\end{equation*}
This is Hadamard's inequality, see \cite{hadamard}. For $\gamma \ge 0$ and
$\left|M_{i,j}\right| \le \gamma$ for all $i,j \in N$ we get in the same way
the inequality $|\det M| \le \gamma^n\,n^{n/2}$. However, Hadamard's more
general bound
\begin{equation*}
  |\det M|
  \le \prod_{i=1}^n \left(\sum_{j=1}^n \left|M_{i,j}\right|^2\right)^\frac12
\end{equation*}
cannot be derived from the $\beta^\frac n2$ bound. The AM-GM inequality shows
that this bound is less than or equal to the $\beta^\frac n2$ bound, and there
are cases where it is strictly smaller. But matrices exist where the case
$\alpha^2 > \beta$ in Proposition \ref{real} applies and yields a bound that is
better than Hadamard's. As an example, Hadamard's bound for the matrix
$\left(\begin{smallmatrix} 1 & 2\\ 2 & 3\end{smallmatrix}\right)$ is
  $\sqrt{65}$ while Proposition \ref{real} gives the bound $\sqrt{32}$.

\subsection{Best's inequality}

If $M_{i,j} \in \{-1,1\}$ for all $i,j \in N$ and $|\det M| = n^{n/2}$,
i.\,e.\ $M$ is a \emph{Hadamard matrix}, then Proposition \ref{real} shows that
$\alpha^2 \le \beta$ must hold. The value $s(M)$ is called the \emph{excess} of
$M$. Because $q(M) = n^2$ in case of $M_{i,j} \in \{-1,1\}$, Proposition
\ref{real}\ yields an upper bound for the excess:
\begin{equation*}
  M\ \text{is a Hadamard matrix}
  \quad\Longrightarrow\quad
  s(M) \le n\sqrt{n}
\end{equation*}
This is known as Best's inequality, see \cite{best}.

\subsection{Inequality of determinant and trace}

For a positive integer $m$ and positive definite matrices $A$ and $B$, Theorem
2.8 in \cite{dannan} says
\begin{equation*}
  (\det AB)^\frac mn \le \frac1n \tr(A^m B^m),
\end{equation*}
which for $m = 1$ and $B = A^T$ reads $(\det A)^\frac2n \le \frac1n \tr(AA^T)$.
As $\tr(AA^T) = q(A)$, Proposition \ref{real} shows that, for the latter
inequality, $A$ does not need to be positive definite.

\subsection{Ryser's inequality}

If $M_{i,j} \in \{0,1\}$ for all $i,j \in N$ and $t := s(M)$ is the number of
1's in $M$, then in Proposition \ref{real} with $k := t/n$ we have $\alpha =
\beta = k$ and get
\begin{eqnarray*}
  \text{if}\ t < n &:& |\det M| \le k^\frac n2\\[0.75em]
  \text{if}\ t = n &:& |\det M| \le 1\\
  \text{if}\ t > n &:& |\det M| \le k^\frac{n+1}2 \left(\frac{n-k}{n-1}\right)^\frac{n-1}2
\end{eqnarray*}
The inequality for the case $t > n$ is Ryser's determinant bound \cite{ryser},
Theorem 3. For the case of $t = 2n$ this was improved by Bruhn and Rautenbach,
see \cite{bruhn}, Theorem 3, and \cite{pfoertner1}.

\subsection{The inequalities of Brent, Osborne and Smith}

Let $\epsilon > 0$, $E \in \M$ with $|E_{i,j}| \le \epsilon$ for all $i,j \in
N$ and $M := I-E$. Then
\begin{equation*}
  \beta
  \le \frac1n \left( n(1+\epsilon)^2 + (n^2-n) \epsilon^2 \right)
  = 1 + 2\epsilon + n \epsilon^2
\end{equation*}
in Proposition \ref{real} gives
\begin{equation*}
  |\det M| \le \left( 1 + 2\epsilon + n \epsilon^2 \right)^\frac n2.
\end{equation*}
This is \cite{brent}, Theorem 3\,(8). In case of $E_{i,i} = 0$ for all $i \in
N$ we have
\begin{equation*}
  \beta
  \le \frac1n \left( n + (n^2-n) \epsilon^2 \right)
  = 1 + (n-1) \epsilon^2
\end{equation*}
and so
\begin{equation*}
  |\det M| \le \left( 1 + (n-1) \epsilon^2 \right)^\frac n2,
\end{equation*}
which is \cite{brent}, Theorem 3\,(9). While, as is demonstrated in
\cite{brent}, both inequalities follow from Hadamard's inequality, the above
reasoning shows that these bounds do not depend on the arrangement of the
dominant entries, and it can also be used to derive bounds if the number of
dominant entries is different from $n$.

\subsection{Determinants of von Koch matrices}\label{sectionkoch}

Let $A := \left(A_{i,j} : i,j = 1 \dots \infty\right)$ be an infinite
matrix of real numbers $A_{i,j}$ with
\begin{equation*}
  \sum_{i=1}^\infty \left|A_{i,i}\right| < \infty
  \quad\text{and}\quad
  \sum_{i,j=1}^\infty \left|A_{i,j}\right|^2 < \infty
\end{equation*}
and $A(n) := \left(A_{i,j} : i,j = 1 \dots n\right)$ for $n \in \N$ be the $n
\times n$ finite submatrix. Then by \cite{koch} or \cite{gohberg}, page 170,
the \textit{infinite determinant}
\begin{equation*}
  \det(I - A) := \lim_{n \to \infty} \det(I - A(n)),
\end{equation*}
is well-defined, where $I$ is the suitable finite or infinite identity matrix.
By Proposition \ref{real}
\begin{equation*}
  |\det(I - A(n))|
  \le \left(\frac1n\sum_{i,j=1}^n \left(\delta_{i,j} - A_{i,j}\right)^2\right)^\frac n2
  = \left(1 + \frac1n\sum_{i,j=1}^n A_{i,j}^2 - \frac2n\sum_{i=1}^n A_{i,i}\right)^\frac n2,
\end{equation*}
 and as $\left(1+\frac{x_n}n\right)^\frac n2 \to e^\frac x2$ for $n \to \infty$
 for a real sequence $(x_n)$ with limit $x$:
\begin{equation}\label{infbound}
  |\det(I - A)|
  \le \exp\left(\frac12\sum_{i,j=1}^\infty A_{i,j}^2 - \sum_{i=1}^\infty A_{i,i}\right)
\end{equation}
One might want to apply the better bound of Proposition \ref{real}'s case
$\alpha^2 > \beta$, but an evaluation reveals that for $n \to \infty$ this
gives the identical inequality (\ref{infbound}).

\subsection{Matrices whose entries are a permutation of an arithmetic progression}\label{sectionarith1}

\begin{proposition}\label{arith1}
  Let $p,q$ be real numbers with $q > 0$ and $M$ a matrix whose entries are a
  permutation of the numbers $p, p+q, \dots, p+(n^2-1)q$. Set
  \begin{equation*}
    r := \frac pq + \frac{n^2-1}2
    \quad,\quad
    \rho := \frac{n^3+n^2+n+1}{12}
    \quad\text{and}\quad
    \sigma := n q^2 \left(r^2 + \frac{n^4-1}{12}\right).
  \end{equation*}
  Then
  \begin{eqnarray*}
    \text{if}\ r^2 < \rho &:& |\det M| \le \sigma^\frac n2\\
    \text{if}\ r^2 = \rho &:& |\det M| \le n^n q^n \, |r| \rho^\frac{n-1}2 = \sigma^\frac n2\\
    \text{if}\ r^2 > \rho &:& |\det M| \le n^n q^n \, |r| \rho^\frac{n-1}2 < \sigma^\frac n2
  \end{eqnarray*}
\end{proposition}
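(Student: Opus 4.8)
The plan is to reduce Proposition \ref{arith1} to a direct application of Proposition \ref{real} by computing $s(M)$ and $q(M)$ in terms of the given data $p$, $q$, $n$. First I would observe that $s(M)$ and $q(M)$ depend only on the multiset of entries, not on their arrangement, so I may compute them for the arithmetic progression $p, p+q, \dots, p+(n^2-1)q$ directly. Writing $m := n^2$ for brevity, I have $s(M) = \sum_{k=0}^{m-1}(p+kq) = mp + q\binom{m}{2}$ and $q(M) = \sum_{k=0}^{m-1}(p+kq)^2 = mp^2 + 2pq\binom{m}{2} + q^2\sum_{k=0}^{m-1}k^2$, using $\sum_{k=0}^{m-1}k^2 = \frac{(m-1)m(2m-1)}{6}$. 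These are routine closed forms.

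Next I would translate into the quantities $\alpha := \frac1n s(M)$ and $\beta := \frac1n q(M)$ of Proposition \ref{real}. The substitution $r := \frac pq + \frac{n^2-1}{2}$ is chosen precisely so that $\alpha = \frac1n\bigl(mp + q\tfrac{m(m-1)}{2}\bigr) = \frac{mq}{n}\bigl(\tfrac pq + \tfrac{m-1}{2}\bigr) = nqr$; completing the square in the expression for $q(M)$ around the same shift $p/q \mapsto r - (m-1)/2$ should collapse the cross terms and leave $\beta = \frac1n q(M) = q^2\bigl(nr^2 + (\text{something})\bigr)$, and I would check that the ``something'' works out to $n\cdot\frac{n^4-1}{12}$ so that $\beta = nq^2\bigl(r^2 + \frac{n^4-1}{12}\bigr) = \sigma$. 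Similarly I would verify $\kappa = \frac{n\beta - \alpha^2}{n-1} = \frac{n\sigma - n^2q^2r^2}{n-1}$; plugging in $\sigma$, the $r^2$ terms give $\frac{n^2q^2r^2 - n^2q^2r^2}{n-1} = 0$ and what remains is $\frac{n^2q^2\cdot\frac{n^4-1}{12}}{n-1} = n^2q^2\cdot\frac{n^4-1}{12(n-1)} = n^2q^2\cdot\frac{(n-1)(n+1)(n^2+1)}{12(n-1)} = n^2q^2\cdot\frac{(n+1)(n^2+1)}{12} = n^2q^2\rho$, matching the definition $\rho = \frac{n^3+n^2+n+1}{12} = \frac{(n+1)(n^2+1)}{12}$.

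Having established $\alpha = nqr$, $\beta = \sigma$, $\kappa = n^2q^2\rho$, and noting $\alpha^2 = n^2q^2r^2$ so that the trichotomy $\alpha^2 \lessgtr \beta$ is exactly $n^2q^2r^2 \lessgtr n^2q^2\rho$, i.e. $r^2 \lessgtr \rho$ (here $q > 0$ is used), the three cases of Proposition \ref{real} read: $|\det M| \le \beta^{n/2} = \sigma^{n/2}$ when $r^2 < \rho$; and $|\det M| \le |\alpha|\kappa^{(n-1)/2} = |nqr|\,(n^2q^2\rho)^{(n-1)/2} = n\,|q|\,|r|\cdot n^{n-1}|q|^{n-1}\rho^{(n-1)/2} = n^n q^n |r|\rho^{(n-1)/2}$ (using $q > 0$) in the cases $r^2 = \rho$ and $r^2 > \rho$, with the $=\sigma^{n/2}$ resp. $<\sigma^{n/2}$ appended exactly as in Proposition \ref{real}. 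This is the claimed statement. I expect no genuine obstacle here: the entire content is the bookkeeping verification that the peculiar-looking constants $r$, $\rho$, $\sigma$ are precisely the images of $\alpha$, $\kappa$, $\beta$ under the given parametrization, and the only mild care needed is getting the algebra of $\sum k^2$ and the completion of the square right and tracking the sign conventions (in particular that $q > 0$ lets us write $n^nq^n$ rather than $n^n|q|^n$).
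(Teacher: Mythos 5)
Your proposal is correct and follows the same route as the paper: compute $s(M)$ and $q(M)$ from the arithmetic progression, identify $\alpha = nqr$, $\beta = \sigma$, $\kappa = n^2q^2\rho$, and feed these into Proposition \ref{real}. The paper's own proof merely records the single identity $\alpha^2 - \beta = n(n-1)q^2(r^2 - \rho)$ and cites Proposition \ref{real}; you carry out the same bookkeeping explicitly, and your algebra checks out.
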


\begin{proof}
  With $\alpha$ and  $\beta$ as in Proposition \ref{real} a calculation
  shows $\alpha^2-\beta = n(n-1)q^2(r^2-\rho)$, hence $\sgn(\alpha^2-\beta)
  = \sgn(r^2-\rho)$, and the bounds noted in Proposition \ref{real} yield the
  asserted inequalities for $|\det M|$.
\end{proof}

\begin{corollary}\label{corarith1}
  If $M$ is a matrix whose entries are a permutation of $0,\dots,n^2-1$, then
  \begin{equation*}
    |\det M| \le n^n\,\frac{n^2-1}2 \left(\frac{n^3+n^2+n+1}{12}\right)^\frac{n-1}2.
  \end{equation*}
  If $M$ is a matrix whose entries are a permutation of $1,\dots,n^2$, then
  \begin{equation*}
    |\det M| \le n^n\,\frac{n^2+1}2 \left(\frac{n^3+n^2+n+1}{12}\right)^\frac{n-1}2.
  \end{equation*}
\end{corollary}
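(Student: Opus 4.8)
The plan is to read off both inequalities from Proposition \ref{arith1} by choosing $p$ and $q$ appropriately. For a matrix whose entries are a permutation of $0,\dots,n^2-1$ one takes $p := 0$ and $q := 1$, so that the numbers $p,p+q,\dots,p+(n^2-1)q$ are exactly $0,1,\dots,n^2-1$; then $r = \tfrac{p}{q} + \tfrac{n^2-1}{2} = \tfrac{n^2-1}{2}$, and $\rho = \tfrac{n^3+n^2+n+1}{12}$ as in the proposition. For a permutation of $1,\dots,n^2$ one takes $p := 1$ and $q := 1$, which gives $r = \tfrac{n^2+1}{2}$ and the same $\rho$.

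The next step is to decide which of the three cases of Proposition \ref{arith1} applies, i.e.\ to compare $r^2$ with $\rho$. I expect $r^2 > \rho$ for every $n > 1$ in both cases. For $r = \tfrac{n^2-1}{2}$ this amounts to $3(n^2-1)^2 > n^3+n^2+n+1$, equivalently to the positivity of $3n^4 - n^3 - 7n^2 - n + 2 = (n+1)(3n^3 - 4n^2 - 3n + 2)$; since $3n^3 - 4n^2 - 3n + 2 = n^2(3n-4) - 3n + 2 \ge 4(3n-4) - 3n + 2 = 9n - 14 > 0$ for $n \ge 2$ (using $n^2 \ge 4$ and $3n-4 \ge 0$), this holds. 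For $r = \tfrac{n^2+1}{2}$ one has $r > \tfrac{n^2-1}{2} \ge 0$, so $r^2 > \bigl(\tfrac{n^2-1}{2}\bigr)^2 > \rho$ a fortiori (alternatively, $3(n^2+1)^2 - (n^3+n^2+n+1) = 3n^4 - n^3 + 5n^2 - n + 2$ is plainly positive).

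Having established $r^2 > \rho$, I would invoke the third line of Proposition \ref{arith1}, namely $|\det M| \le n^n q^n\,|r|\,\rho^{(n-1)/2}$, and substitute $q = 1$, then $|r| = \tfrac{n^2-1}{2}$ in the first case and $|r| = \tfrac{n^2+1}{2}$ in the second, together with $\rho = \tfrac{n^3+n^2+n+1}{12}$; this reproduces precisely the two displayed bounds. I do not anticipate any real obstacle: everything except the sign of $r^2-\rho$ is pure substitution, and that sign is settled by the elementary polynomial estimate above.
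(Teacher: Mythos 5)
Your proposal is correct and follows exactly the paper's route: apply Proposition \ref{arith1} with $(p,q)=(0,1)$ and $(p,q)=(1,1)$, check that $r^2>\rho$, and read off the third case. The paper merely asserts that $r^2>\rho$ is ``easy to see,'' whereas you supply the explicit (and correct) polynomial verification.
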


\begin{proof}
  Apply Proposition \ref{arith1} to $(p,q) := (0,1)$ and to $(p,q) := (1,1)$,
  respectively. In both applications it is easy to see that $r^2 > \rho$, which
  yields the stated bound.
\end{proof}

Actual maximal determinants for this kind of matrices suggest

\begin{question}
  Let $b(n)$ be the upper bound given in Corollary \ref{corarith1} for matrices
  with entries $1,\dots,n^2$. Regarding \cite{wilson}, do we have
  \begin{equation*}
    \lim_{n \to \infty} \frac{A085000(n)}{b(n)} = 1 ?
    \end{equation*}
\end{question}

\subsection{A variation of the previous theme}\label{sectionarith2}

\begin{proposition}\label{arith2}
  Let $p,q$ be real numbers with $q > 0$ and $M$ a matrix such that each of the
  numbers $p, p+q, \dots, p+(n-1)q$ appears $n$ times in $M$. Set
  \begin{equation*}
    r := \frac pq + \frac{n-1}2
    \quad,\quad
    \rho := \frac{n+1}{12}
    \quad\text{and}\quad
    \sigma := n q^2 \left(r^2 + \frac{n^2-1}{12}\right).
  \end{equation*}
  Then
  \begin{eqnarray*}
    \text{if}\ r^2 < \rho &:& |\det M| \le \sigma^\frac n2\\
    \text{if}\ r^2 = \rho &:& |\det M| \le n^n q^n |r| \rho^\frac{n-1}2 = \sigma^\frac n2\\
    \text{if}\ r^2 > \rho &:& |\det M| \le n^n q^n |r| \rho^\frac{n-1}2 < \sigma^\frac n2
  \end{eqnarray*}
\end{proposition}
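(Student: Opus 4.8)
The plan is to follow the route taken for Proposition~\ref{arith1}: evaluate $\alpha := \tfrac1n s(M)$ and $\beta := \tfrac1n q(M)$ for this class of matrices, determine the sign of $\alpha^2-\beta$, and then invoke Proposition~\ref{real}.

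First I would compute the two invariants. Since each of the $n$ numbers $p+kq$ ($k=0,\dots,n-1$) occurs exactly $n$ times among the $n^2$ entries of $M$,
\begin{equation*}
  s(M) = n\sum_{k=0}^{n-1}(p+kq), \qquad q(M) = n\sum_{k=0}^{n-1}(p+kq)^2 .
\end{equation*}
Using $\sum_{k=0}^{n-1}k = \tfrac{n(n-1)}{2}$ and $\sum_{k=0}^{n-1}k^2 = \tfrac{n(n-1)(2n-1)}{6}$ and then substituting $p = q\bigl(r-\tfrac{n-1}{2}\bigr)$, this collapses to $\alpha = nqr$ and, after the elementary identity $-\tfrac{(n-1)^2}{4} + \tfrac{(n-1)(2n-1)}{6} = \tfrac{n^2-1}{12}$, to $\beta = nq^2\bigl(r^2 + \tfrac{n^2-1}{12}\bigr) = \sigma$. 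In particular the $\beta^{n/2}$ bound of Proposition~\ref{real} is exactly $\sigma^{n/2}$.

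Next I would read off the remaining quantities appearing in Proposition~\ref{real}. From $\alpha = nqr$ and $\beta = \sigma$ one gets $\alpha^2-\beta = nq^2\bigl((n-1)r^2 - \tfrac{n^2-1}{12}\bigr) = n(n-1)q^2(r^2-\rho)$, and since $n>1$ and $q>0$ this gives $\sgn(\alpha^2-\beta) = \sgn(r^2-\rho)$. Likewise $\kappa := \tfrac{n\beta-\alpha^2}{n-1}$ simplifies (the $r^2$-terms cancelling) to $n^2q^2\rho$, whence $|\alpha|\,\kappa^{(n-1)/2} = nq|r|\cdot n^{n-1}q^{n-1}\rho^{(n-1)/2} = n^nq^n|r|\,\rho^{(n-1)/2}$. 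Substituting $\sgn(\alpha^2-\beta) = \sgn(r^2-\rho)$ together with these closed forms for $\beta$, $\kappa$ and $|\alpha|\kappa^{(n-1)/2}$ into the three cases of Proposition~\ref{real} yields the three asserted inequalities at once.

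There is no real obstacle: the statement is a direct specialisation of Proposition~\ref{real}. The only part demanding care is the bookkeeping in passing from $p/q$ to the shifted variable $r$ and checking the polynomial identities flagged above, all of which is routine.
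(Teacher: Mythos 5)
Your proposal is correct and follows exactly the paper's route: the paper's own proof simply records the identity $\alpha^2-\beta = n(n-1)q^2(r^2-\rho)$ and invokes Proposition~\ref{real}, leaving the computations of $\alpha = nqr$, $\beta = \sigma$ and $\kappa = n^2q^2\rho$ implicit. You have merely written out those routine verifications, all of which check out.
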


\begin{proof}
  We have again $\alpha^2-\beta = n(n-1)q^2(r^2 -\rho)$ and can apply
  Proposition \ref{real}.
\end{proof}

\begin{corollary}\label{corarith2}
  If $M$ is a matrix such that each of the numbers $0,\dots,n-1$ appears
  $n$ times in M, then
  \begin{equation*}
    |\det M| \le n^n\,\frac{n-1}2 \left(\frac{n+1}{12}\right)^\frac{n-1}2.
  \end{equation*}
  If $M$ is a matrix such that each of the numbers $1,\dots,n$ appears
  $n$ times in M, then
  \begin{equation*}
    |\det M| \le n^n\,\frac{n+1}2 \left(\frac{n+1}{12}\right)^\frac{n-1}2.
  \end{equation*}
\end{corollary}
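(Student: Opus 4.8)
The plan is to derive both inequalities directly from Proposition \ref{arith2}, exactly as Corollary \ref{corarith1} is obtained from Proposition \ref{arith1}. For the first assertion I would apply Proposition \ref{arith2} with $(p,q) := (0,1)$: the numbers $p, p+q, \dots, p+(n-1)q$ are then $0, 1, \dots, n-1$, and the hypothesis says each of them appears $n$ times in $M$. Here $r = \frac{n-1}{2}$ and $\rho = \frac{n+1}{12}$, so comparing $r^2$ with $\rho$ amounts to comparing $3(n-1)^2$ with $n+1$, i.e.\ to the sign of $3n^2 - 7n + 2 = (n-2)(3n-1)$, which is nonnegative for all $n \ge 2$. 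Hence $r^2 \ge \rho$, and in both the case $r^2 = \rho$ (which occurs exactly for $n = 2$) and the case $r^2 > \rho$ Proposition \ref{arith2} delivers the bound $n^n q^n |r| \rho^{\frac{n-1}{2}}$; substituting $q = 1$, $r = \frac{n-1}{2}$, $\rho = \frac{n+1}{12}$ yields $n^n\,\frac{n-1}2 \big(\frac{n+1}{12}\big)^{\frac{n-1}2}$, as claimed.

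For the second assertion I would instead apply Proposition \ref{arith2} with $(p,q) := (1,1)$, so the relevant numbers are $1, 2, \dots, n$, each appearing $n$ times in $M$. Now $r = \frac{n+1}{2}$ while $\rho = \frac{n+1}{12}$ is unchanged, and the inequality $r^2 > \rho$ reduces to $3(n+1) > 1$, which holds trivially. Thus the case $r^2 > \rho$ of Proposition \ref{arith2} applies, and substituting $q = 1$, $r = \frac{n+1}{2}$, $\rho = \frac{n+1}{12}$ gives the bound $n^n\,\frac{n+1}2 \big(\frac{n+1}{12}\big)^{\frac{n-1}2}$.

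There is no genuine obstacle here: the argument is a straightforward specialization of Proposition \ref{arith2}. The only point that demands a moment's attention is the boundary case $n = 2$ in the first part, where $r^2$ equals $\rho$ rather than strictly exceeding it; but since the two relevant cases of Proposition \ref{arith2} share the same closed-form bound $n^n q^n |r| \rho^{\frac{n-1}{2}}$, the stated inequality holds uniformly in $n$.
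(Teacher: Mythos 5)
Your proof is correct and follows exactly the paper's route: both bounds are obtained by specializing Proposition \ref{arith2} to $(p,q)=(0,1)$ and $(p,q)=(1,1)$. You are in fact more careful than the paper, which omits the verification that $r^2 \ge \rho$; your observation that $r^2 = \rho$ occurs for $n=2$ in the first case, with the same closed-form bound applying, is a correct and worthwhile detail.
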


\begin{proof}
  Apply Proposition \ref{arith2} to $(p,q) := (0,1)$ and to $(p,q) := (1,1)$.
\end{proof}

Again we would like to pose the

\begin{question}
  Let $b(n)$ be the upper bound given in Corollary \ref{corarith2} for matrices
  with entries $1,\dots,n$. Regarding \cite{pfoertner2}, do we have
  \begin{equation*}
    \lim_{n \to \infty} \frac{A301371(n)}{b(n)} = 1 ?
    \end{equation*}
\end{question}

\end{document}